\numberwithin{equation}{section}
\newtheorem{thm}{Theorem}
\newtheorem{conj}[thm]{Conjecture}
\newtheorem{rmk}[thm]{Remark}
\begin{document}

\title{On the equivariant log-concavity for the cohomology of the flag varieties}

\author{Tao Gui}
\address{Academy of Mathematics and Systems Science, Chinese Academy of Sciences, Beijing, China}
\email{guitao18@mails.ucas.ac.cn}

\maketitle

\begin{abstract}
We study the $S_n$-equivariant log-concavity of the cohomology of flag varieties, also known as the coinvariant ring of $S_n$. Using the theory of representation stability, we give computer-assisted proofs of the equivariant log-concavity in low degrees and high degrees and conjecture that it holds for all degrees. Furthermore, we make a stronger unimodal conjecture which implies the equivariant log-concavity.
\end{abstract}

The motivation of this notes is the following conjecture 

\begin{conj} \label{flag}
For all integer $n \geq 1$, the cohomology ring of the flag manifold $U_{n} / T$, which is also known as the coinvariant ring of $S_{n}$: $H^{2 *}(U_{n} / T, \mathbb{C})\cong \mathbb{C}\left[t_{1}, \ldots, t_{n}\right] /\left(\sigma_{1}, \ldots, \sigma_{n}\right)$, is equivariantly log-concave as graded representation of $S_{n}$ in the sense that $H^{2*(m-1)} \otimes H^{2*(m+1)}$ is isomorphic to a subrepresentation of $H^{2*m} \otimes H^{2*m}$ for all $1 \leq m \leq \binom{n}{2}-1$, where the $t_{j}$'s are of degree 2 and the $\sigma_{i}$'s are the elementary symmetric polynomials in the variables $t_{j}$'s.
\end{conj}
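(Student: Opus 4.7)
The plan is to reduce Conjecture \ref{flag} to a family of Kronecker-coefficient-weighted numerical inequalities and then attack them range-by-range: Poincar\'e duality will collapse high degrees to low ones, representation stability will handle the low-degree range, and (speculatively) a stronger unimodality statement will be needed to cover the middle range.

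First I would translate the equivariant inclusion into multiplicity inequalities. Writing $R_n^m := H^{2m}(U_n/T,\mathbb{C})$ and using the classical Lusztig--Stanley formula that the multiplicity $a_\lambda^m := [V^\lambda : R_n^m]$ equals $\#\{T \in \mathrm{SYT}(\lambda) : \mathrm{maj}(T) = m\}$, the conjectured containment is equivalent, via Schur's lemma and the Kronecker decomposition, to
\[
\sum_{\mu,\nu \vdash n} g_{\mu\nu}^\lambda \, a_\mu^{m-1} a_\nu^{m+1} \; \le \; \sum_{\mu,\nu \vdash n} g_{\mu\nu}^\lambda \, a_\mu^m a_\nu^m
\qquad \text{for every } \lambda \vdash n,
\]
where $g_{\mu\nu}^\lambda = \dim \mathrm{Hom}_{S_n}(V^\lambda, V^\mu \otimes V^\nu)$. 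High degrees then reduce to low ones via Poincar\'e duality: the fake-degree identity $f^{\lambda'}(q) = q^{\binom{n}{2}} f^\lambda(q^{-1})$ amounts to $R_n^m \otimes \mathrm{sgn} \cong R_n^{\binom{n}{2}-m}$ as $S_n$-modules, and tensoring with $\mathrm{sgn}$ is an autoequivalence of the category of $S_n$-representations, so it suffices to treat $m \le \lfloor \binom{n}{2}/2 \rfloor$.

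For the low-degree range I would invoke representation stability. The constraint $a_\lambda^m \ne 0 \Rightarrow |\lambda| - \lambda_1 \le m$, combined with Murnaghan's stability theorem, shows that once $n$ is large compared to $m$ both sides of the displayed inequality become polynomial expressions in $n$ whose coefficients are determined by the ``reduced'' partition data. This packages infinitely many $n$'s into finitely many explicit inequalities per fixed degree $m$, which can then be verified symbolically or by computer in the spirit of the FI-module methods of Church--Ellenberg--Farb.

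The main obstacle is the middle-degree range, where partitions of all shapes contribute non-trivially and Kronecker coefficients are notoriously intractable (indeed $\#P$-hard in general). Here I would pursue the stronger equivariant unimodality statement alluded to in the abstract: if one can exhibit a graded $S_n$-module --- for instance of the form $\bigoplus_m R_n^m \otimes R_n^{d-m}$, or a Schur-type complex built from $R_n$ --- that is equivariantly unimodal with respect to a Lefschetz-type operator compatible with the $S_n$-action on $U_n/T$, then equivariant log-concavity of $R_n$ follows by a standard character-theoretic argument. Constructing such an operator geometrically and proving its $S_n$-equivariance is where genuinely new ideas beyond representation stability will be needed, and is the step I expect to be hardest.
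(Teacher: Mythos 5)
Your plan mirrors the paper's own roadmap almost exactly: the translation to Kronecker-coefficient inequalities via the Lusztig--Stanley fake-degree formula is Conjectures \ref{Schur} and \ref{kro}, the Poincar\'e duality reduction $R_n^m \otimes \mathrm{sgn} \cong R_n^{\binom{n}{2}-m}$ is equation (\ref{dual}), the FI-module / representation-stability reduction for fixed small degree is Theorem \ref{evi}, and the appeal to a stronger unimodality statement for the middle range is Conjecture \ref{uni}. The one genuinely new ingredient you float --- constructing an $S_n$-equivariant Lefschetz-type operator to force the unimodality --- is not in the paper, which only records the unimodality of the multiplicity sequence $d_{\nu,i}$ as a numerical observation; that is an interesting direction, but note that the object to be made unimodal is a \emph{virtual} representation (a difference of tensor products), so it is not obvious what geometric space such an operator would act on. Be aware, too, that the statement under review is a conjecture: neither the paper nor your plan closes the middle-degree gap, and you correctly identify that range, where Kronecker coefficients of all shapes contribute, as the real obstruction.
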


\textbf{Acknowledgment}
The author would like to thank Ming Fang, Hongsheng Hu, Nicolai Reshetikhin, Peng Shan, Nanhua Xi, and Rui Xiong for useful discussions, and is grateful to Matthew H.Y. Xie and Arthur L.B. Yang for the help with computer calculations.

\setcounter{tocdepth}{2}

\section{Cohomology of flag variaties: the Borel's picture}
 
We begin with the classical Borel's picture. Consider the full flag variety $\mathcal{F}_{n}$ over $\mathbb{C}$, parametrizing all complete flags in a $n$-dimensional $\mathbb{C}$-vector space $V$: $$
\{0\}=V_{0} \subset V_{1} \subset V_{2} \subset \cdots \subset V_{n}=V.
$$
Written as a homogeneous space, $\mathcal{F}_{n} \cong \mathrm{GL}(n, \mathbb{C}) / B_{n}$, where $B_{n}$ is the group of non-singular upper triangular matrices. In particular, $\mathcal{F}_{n}$ has dimension $n(n-1) / 2$ over $\mathbb{C}$.

The classical Borel's picture \cite{borel1953cohomologie} gives a explicit and canonical description of the cohomology of $\mathcal{F}_{n}$ using the so called coinvariant ring.
\begin{thm}[Borel,  \cite{borel1953cohomologie}]
As a graded ring,
\begin{equation} \label{Borel}
H^{*}(\mathcal{F}_{n}, \mathbb{C}) \cong \mathbb{C}\left[t_{1}, \ldots, t_{n}\right] /\left(\sigma_{1}, \ldots, \sigma_{n}\right),
\end{equation} 
where the $t_{j}$'s are of degree 2 and the $\sigma_{i}$'s are the elementary symmetric polynomials in the variables $t_{j}$'s.
\end{thm}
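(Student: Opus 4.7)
The plan is to construct a natural graded ring homomorphism from the coinvariant algebra to $H^*(\mathcal{F}_n, \mathbb{C})$ using Chern classes of the tautological subquotient line bundles, to establish surjectivity via an inductive Leray--Hirsch argument on the forgetful tower of partial flag varieties, and then to conclude by a dimension count. Concretely, I would introduce the tautological filtration $0 = \mathcal{V}_0 \subset \mathcal{V}_1 \subset \cdots \subset \mathcal{V}_n = V \times \mathcal{F}_n$ on $\mathcal{F}_n$ (the fiber of $\mathcal{V}_i$ over a flag being the $i$-th subspace of that flag), define the line bundles $L_i := \mathcal{V}_i/\mathcal{V}_{i-1}$, and set $t_i := c_1(L_i) \in H^2(\mathcal{F}_n, \mathbb{C})$. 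Because the top bundle is trivial while its associated graded is $\bigoplus_i L_i$, multiplicativity of the total Chern class under filtrations gives
\begin{equation*}
1 = c(V \times \mathcal{F}_n) = \prod_{i=1}^n (1+t_i),
\end{equation*}
forcing $\sigma_k(t_1,\ldots,t_n) = 0$ in $H^*(\mathcal{F}_n,\mathbb{C})$ for every $1 \leq k \leq n$. This yields a well-defined graded algebra map $\varphi \colon \mathbb{C}[t_1,\ldots,t_n]/(\sigma_1,\ldots,\sigma_n) \to H^*(\mathcal{F}_n,\mathbb{C})$.

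Next I would prove surjectivity using the tower of projections obtained by successively forgetting the top subspace of a partial flag, terminating at a point. Each map in this tower is a $\mathbb{P}^{n-k}$-bundle (the fiber over a partial flag $V_1 \subset \cdots \subset V_{k-1}$ being $\mathbb{P}(V/V_{k-1})$), and on each fiber the line bundle $L_k$ restricts to the tautological $\mathcal{O}(-1)$; hence the classes $1, t_k, t_k^2, \ldots, t_k^{n-k}$ restrict to a basis of the fiber cohomology. By the Leray--Hirsch theorem the cohomology of each total space is a free module over that of the base on these classes, and iterating shows that $H^*(\mathcal{F}_n, \mathbb{C})$ is generated as a $\mathbb{C}$-algebra by $t_1, \ldots, t_n$, so $\varphi$ is surjective.

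Finally I would match dimensions. The target has $\dim H^*(\mathcal{F}_n,\mathbb{C}) = n!$, either from the Bruhat decomposition into $|S_n| = n!$ even-dimensional Schubert cells, or by multiplying out the fiber cohomologies in the tower above to obtain the Poincar\'e polynomial $[n]_{q^2}! := \prod_{j=1}^n (1 + q^2 + \cdots + q^{2(j-1)})$ and evaluating at $q = 1$. The source also has dimension $n!$: this is the Chevalley--Shephard--Todd theorem for the reflection representation of $S_n$, or equivalently one exhibits the staircase monomial basis $\{t_1^{a_1}\cdots t_n^{a_n} : 0 \leq a_i \leq n-i\}$ via a Gr\"obner basis argument. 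A surjection between finite-dimensional graded vector spaces of equal total dimension is an isomorphism, so $\varphi$ is an isomorphism. The main obstacle, in my view, is the inductive Leray--Hirsch step: one must realize each forgetful map as an honest projective bundle and verify that $L_k$ restricts to $\mathcal{O}(-1)$ on the fiber, after which the algebra essentially writes itself.
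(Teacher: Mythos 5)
The paper does not prove this theorem; it attributes it to Borel and cites \cite{borel1953cohomologie}, then immediately uses it as a black box. So there is no in-paper argument for you to have matched. (The paper does invoke the fibration $\mathcal{F}_{n-1}\hookrightarrow\mathcal{F}_n\to\mathbb{CP}^{n-1}$ and Leray--Hirsch, but only to get the Poincar\'e polynomial, not the ring presentation.)

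Your proof is correct, complete, and is the standard modern geometric argument. The three ingredients are all sound: the Whitney sum formula applied to the tautological filtration $0=\mathcal{V}_0\subset\cdots\subset\mathcal{V}_n=V\times\mathcal{F}_n$ gives $\prod_i(1+t_i)=c(V\times\mathcal{F}_n)=1$, hence $\sigma_k(t)=0$ and a well-defined map $\varphi$; the forgetful tower $\mathrm{Fl}(1,\ldots,k;V)\to\mathrm{Fl}(1,\ldots,k-1;V)$ is the projectivization $\mathbb{P}\bigl((V\times\mathrm{Fl})/\mathcal{V}_{k-1}\bigr)$, on whose fiber $L_k$ is indeed $\mathcal{O}(-1)$, so Leray--Hirsch yields surjectivity and simultaneously the Poincar\'e polynomial $\prod_{j=1}^n(1+q^2+\cdots+q^{2(j-1)})$; and the $n!$ count on the source side via Chevalley--Shephard--Todd (or the staircase monomial basis) finishes it. One presentational note: this is not how Borel originally argued. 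His proof runs the Serre spectral sequence of the universal fibration $G/T\hookrightarrow BT\to BG$ and identifies $H^*(BG)\hookrightarrow H^*(BT)$ with the invariant subring, deducing the coinvariant description by a transgression argument; the Chern-class-plus-Leray--Hirsch route you chose is cleaner, avoids spectral sequences entirely, and gives the explicit additive (Schubert-free) basis as a by-product. The only thing I would tighten is the phrase ``one must realize each forgetful map as an honest projective bundle'': this is immediate once you write it as $\mathbb{P}\bigl((V\times\mathrm{Fl})/\mathcal{V}_{k-1}\bigr)$, so it is less an obstacle than a one-line observation worth stating explicitly.
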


Use this identification or use the fiber bundle 
\begin{equation}
\begin{aligned} \mathcal{F}_{n-1} \hookrightarrow & \mathcal{F}_{n} \\ & \downarrow \\ & \mathbb{C P}^{n-1} \end{aligned}
\end{equation}  (the vertical map comes from projecting each flag to the containing line) and the Leray–Hirsch theorem, one can easily compute the Betti numbers $b_{2i}$ of $\mathcal{F}_{n}$ (the odd Betti numbers $b_{2i+1}$ are all $0$), which is equal to the cardinality of $\left\{w \in S_{n} \mid l(w)=i\right\}$ by the Bruhat decomposition, where $l(w)$ is the inversion number of the permutation $w$, and the Poincaré polynomial is
\begin{equation}
P_{\mathcal{F}_{n}}(q):=\sum_{i} b_{2i} q^{i}=P_{\mathcal{F}_{n-1}} \cdot\left(1+q+\cdots+q^{n-1}\right)=\prod_{k=0}^{n-1}\left(1+q+\cdots+q^{k}\right).
\end{equation}
It follows that $P_{\mathcal{F}_{n}}(q)$ is a symmetric, unimodal, and log-concave polynomial \footnote{We call a polynomial symmetric (unimodal, log-concave, respectively) if the sequence of coefficients is symmetric (unimodal, log-concave, respectively). It is easy to see that if two polynomials are log-concave with all positive coefficients, so is their product.}.

Since $\mathcal{F}_{n}$ is a smooth complex projective variety, the symmetry of the Betti numbers comes from the Poincaré duality, and the unimodality comes from the hard Lefschetz theorem, but where is the log-concavity comes from?

\section{The action, the graded character, and the graded multiplicity}

Actually, $S_{n}$ as the Weyl group of $\mathrm{GL}(n, \mathbb{C})$ can not act naturally on the flag variety $\mathcal{F}_{n} \cong \mathrm{GL}(n, \mathbb{C}) / B_{n}$. Nevertheless, it is a classical fact that it indeed acts on the cohomology $H^{*}(\mathcal{F}_{n})$. For example, one can use the diffeomorphism
$U_{n} / T \rightarrow \mathrm{GL}(n, \mathbb{C}) / B_{n}$ and the natural action of $S_{n} \cong N_{T} / T$ on $U_{n} / T$ to induce the action of $S_{n}$ on $H^{*}(\mathcal{F}_{n})$, where $U_{n}$ is the unitary group, $T$ is the compact torus, $N_{T}$ is the normalizer of $T$ in $U_{n}$. 

Use this action, the Borel isomorphism $$H^{*}(\mathcal{F}_{n}, \mathbb{C}) \cong \mathbb{C}\left[t_{1}, \ldots, t_{n}\right] /\left(\sigma_{1}, \ldots, \sigma_{n}\right)$$ becomes an isomorphism of graded representations, where $S_{n}$ acts on the right hind side by permuting the $t_{i}$'s. Using equivariant cohomology, it is a classical result that if we forget the grading, $$H^{*}(\mathcal{F}_{n}, \mathbb{C}) \cong \mathbb{C}\left[t_{1}, \ldots, t_{n}\right] /\left(\sigma_{1}, \ldots, \sigma_{n}\right)$$ carries the regular representation of $S_{n}$, see for example, \cite{atiyah2001equivariant}. Therefore, the cohomological degrees cut the regular representation into graded pieces, and what we are interested is the log-concavity of the tensor product corresponding to this grading.

Consider the fiber bundle
\begin{equation}
\begin{aligned} \mathcal{F}_{n} \cong U_{n} / T \hookrightarrow & \mathcal{B}_{T} \\ & \downarrow \\ & \mathcal{B}_{U} \end{aligned},
\end{equation} 
where $\mathcal{B}_{T}$ and $\mathcal{B}_{U}$ are the classifying spaces of $T$ and $U_{n}$, respectively. Taking cohomology, we get 
\begin{equation}
\mathbb{C}\left[t_{1}, \ldots, t_{n}\right] \cong \mathbb{C}\left[t_{1}, \ldots, t_{n}\right] /\left(\sigma_{1}, \ldots, \sigma_{n}\right) \otimes \mathbb{C}\left[\sigma_{1}, \ldots, \sigma_{n}\right]
\end{equation}
as graded representations of $S_{n}$. Therefore, the graded character of $H^{*}(\mathcal{F}_{n})$ is given by
\begin{equation} \label{grcha}
\chi(w, q)=\prod_{i=1}^{n}\left(1-q^{i}\right) / \prod_{i=1}^{k}\left(1-q^{\lambda i}\right),
\end{equation}
where $w \in S_{n}$ is a permutation of cycle type $\lambda=(\lambda_{1}, \cdots, \lambda_{k})$.

It is easy to see that 
\begin{equation} \label{dual}
q^{n(n-1) / 2} \chi(w, q^{-1})=(-1)^{l(w)}\chi(w, q)
\end{equation}
by the graded character (\ref{grcha}), therefore \emph{representations in complement degrees differ by tensoring the sign representation}, which can also be deduced from the Poincare duality. Using the graded character (\ref{grcha}), one can compute the graded multiplicities of $H^{*}(\mathcal{F}_{n})$, and by work of Lusztig and Stanley,
the multiplicities have the following combinatorial interpretation

\begin{thm}(Lusztig--Stanley, \cite[Prop. 4.11]{stanley1979invariants}\footnote{See \cite[Theorem 8.8]{MR1231799} for a proof of this result and the graded character (\ref{grcha}).})
For any partition $\lambda$ of $n$, as long as $i\leq n(n-1) / 2$, the multiplicity of the irreducible representation
$V(\lambda)$ of $S_n$ corresponding to $\lambda$ in $H^{2i}(\mathcal{F}_{n})$ equals the number \footnote{Known as the “fake degree”, see, for example, \cite{billey2020tableau}.} $b_{\lambda, i}$ of standard tableaux of shape $\lambda$ with major index equal to $i$, where the major index of a tableau\footnote{There is also a notion of the major index of a permutation, which is equal to the major index of the recording tableau (the Q-symbol) of that permutation under the Robinson–Schensted–Knuth correspondence.} is the sum of the numbers $j$ so that the box labeled $j+1$ is in a lower row than the box labeled $j$.
\end{thm}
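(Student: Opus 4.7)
The approach is to pass from the graded character formula (\ref{grcha}) to the graded Frobenius characteristic of $H^{*}(\mathcal{F}_{n})$, simplify by the Cauchy identity in its principally specialized form, and then conclude with the classical $q$-hook length formula for standard Young tableaux counted by major index.

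First, by the definition of the graded Frobenius characteristic combined with (\ref{grcha}),
\[
\mathrm{ch}_{q}\,H^{*}(\mathcal{F}_{n}) = \prod_{i=1}^{n}(1-q^{i})\,\cdot\,\sum_{\lambda\vdash n}\frac{p_{\lambda}(x)}{z_{\lambda}\prod_{j}(1-q^{\lambda_{j}})}.
\]
Recognizing $\prod_{j}(1-q^{\lambda_{j}})^{-1}=p_{\lambda}(1,q,q^{2},\ldots)$ as a principal specialization and applying the Cauchy identity rewrites the inner sum as $\sum_{\mu\vdash n}s_{\mu}(x)\,s_{\mu}(1,q,q^{2},\ldots)$. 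Extracting the coefficient of $s_{\mu}$ therefore yields the graded multiplicity of $V(\mu)$:
\[
\sum_{i} b_{\mu,i}\,q^{i} \;=\; \prod_{i=1}^{n}(1-q^{i})\cdot s_{\mu}(1,q,q^{2},\ldots).
\]

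Next, I would substitute the standard principal specialization of the Schur polynomial, $s_{\mu}(1,q,q^{2},\ldots)=q^{n(\mu)}/\prod_{c\in\mu}(1-q^{h(c)})$ with $n(\mu)=\sum_{i}(i-1)\mu_{i}$ and $h(c)$ the hook length, to obtain the closed form
\[
\sum_{i} b_{\mu,i}\,q^{i} \;=\; \frac{q^{n(\mu)}\prod_{i=1}^{n}(1-q^{i})}{\prod_{c\in\mu}(1-q^{h(c)})}.
\]

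To finish I would invoke the $q$-analogue of the hook length formula for standard tableaux,
\[
\sum_{T\in\mathrm{SYT}(\mu)} q^{\mathrm{maj}(T)} \;=\; \frac{q^{n(\mu)}\,[n]_{q}!}{\prod_{c\in\mu}[h(c)]_{q}},
\]
and observe that cancelling the common factor $(1-q)^{n}$ between numerator and denominator turns the right-hand side into exactly the rational function computed above; the hypothesis $i\leq\binom{n}{2}$ reflects only that both sides are supported in that range (the cohomology by dimension reasons, the tableau sum because the largest possible major index of a standard tableau with $n$ cells is $\binom{n}{2}$). The main obstacle is the last identity—this is a nontrivial combinatorial theorem, typically proved via Stanley's $P$-partition theory or via a bijection packaging RSK with a charge-type statistic. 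Granting it, the remainder of the argument is essentially formal manipulation with symmetric functions.
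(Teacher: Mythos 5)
The paper does not prove this theorem; it is stated as a citation to Stanley's Prop.~4.11 (with \cite{MR1231799}, Thm.~8.8 as the reference for a proof), so there is no in-paper argument to compare against. Your proof is the standard textbook one and it is correct: applying the Frobenius characteristic map to the graded character \eqref{grcha}, recognizing $\prod_j(1-q^{\lambda_j})^{-1}$ as the stable principal specialization $p_\lambda(1,q,q^2,\dots)$, invoking the Cauchy identity to rewrite the Frobenius image as $\prod_{i=1}^n(1-q^i)\sum_{\mu\vdash n} s_\mu(x)\,s_\mu(1,q,q^2,\dots)$, and then matching $\prod_{i=1}^n(1-q^i)\,s_\mu(1,q,q^2,\dots)$ against the major-index generating function for standard tableaux.

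One small streamlining you could make: you do not actually need the hook-content principal specialization nor the $q$-hook-length formula as two separate inputs. The identity $\sum_{T\in\mathrm{SYT}(\mu)} q^{\mathrm{maj}(T)} = \prod_{i=1}^n(1-q^i)\cdot s_\mu(1,q,q^2,\dots)$ is itself a single classical fact (Stanley's $P$-partition theory; EC2, Prop.~7.19.11), and feeding it directly into your Cauchy-identity computation finishes the proof in one step. Passing through hook lengths, as you do, is an equivalent detour since the two formulas you quote combine to give exactly this identity after cancelling $(1-q)^n$. Also, as you observe, the hypothesis $i\leq \binom{n}{2}$ in the theorem statement is vacuous: both the cohomology and the major-index statistic vanish beyond that degree, so the equality of generating functions holds unconditionally.
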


Therefore, using the Frobenius characteristic map, one can transform Conjecture \ref{flag} into a Schur positivity conjecture in the symmetric function theory.

\begin{conj} \label{Schur}
For all natural number $n$ and $1 \leq i \leq \binom{n}{2}-1$, the symmetric function $S_{n, i} * S_{n, i}-S_{n, i-1} * S_{n, i+1}$ is a non-negative linear combination of the Schur polynomials, where $S_{n, i}:=\sum_{\lambda \vdash n} b_{\lambda, i} S_{\lambda}$, where $S_{\lambda}$ is the Schur polynomial corresponding to partition $\lambda$, $*$ denote the internal product of symmetric functions.
\end{conj}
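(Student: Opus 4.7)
The plan is to pull Conjecture \ref{Schur} back through the Frobenius characteristic to the equivariant log-concavity statement (Conjecture \ref{flag}), and then attack it with the two structural tools already present in the excerpt: Poincar\'e duality (\ref{dual}) and representation stability in $n$. By the Lusztig--Stanley theorem, $S_{n,i}=\mathrm{ch}\, H^{2i}(\mathcal{F}_n)$ for $i\le\binom{n}{2}$, and the internal product $*$ is the Frobenius characteristic of the tensor product of $S_n$-representations, so the Schur positivity of $S_{n,i}*S_{n,i}-S_{n,i-1}*S_{n,i+1}$ is equivalent to the existence of an $S_n$-equivariant injection
\[
H^{2(i-1)}(\mathcal{F}_n)\otimes H^{2(i+1)}(\mathcal{F}_n)\hookrightarrow H^{2i}(\mathcal{F}_n)\otimes H^{2i}(\mathcal{F}_n).
\]
Proving this embedding becomes the target, and the character identity (\ref{dual}) immediately halves the work: tensoring any candidate injection in degree $i$ with the sign character produces one in degree $\binom{n}{2}-i$, so one may restrict to $1\le i\le\lfloor\binom{n}{2}/2\rfloor$.

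For the low-degree part of this range, I would invoke representation stability: for each fixed $i$, the fake degree $b_{\lambda,i}$ is eventually polynomial in the parts of $\lambda$ as $n\to\infty$ (with the first row absorbing all growth), so $S_{n,i}$ stabilizes to a character polynomial in the FI-module sense. Consequently, for each small $i$ the Schur positivity of $S_{n,i}*S_{n,i}-S_{n,i-1}*S_{n,i+1}$ is controlled by a finite amount of data and reduces to a single finite computer check, together with direct verification for the boundary small-$n$ cases; by the duality above, the symmetric high-degree range is handled along with the low range.

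The main obstacle will be the middle range $i\approx\binom{n}{2}/2$, which is resistant to both reductions: here $i$ grows with $n$ so representation stability has no foothold, and Poincar\'e duality maps the range to itself. Closing this gap appears to require either the stronger unimodal conjecture mentioned in the abstract---which would give a hard-Lefschetz-style constructive injection by multiplication by an appropriate equivariant class---or a new positive combinatorial identity on the fake degrees $b_{\lambda,i}$ directly expressing the difference in Conjecture \ref{Schur} as a manifestly Schur-positive sum.
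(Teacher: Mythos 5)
Conjecture \ref{Schur} is, as its name says, a \emph{conjecture} in this paper; it is not proved, and your proposal is honest in not claiming to prove it. What the paper actually does is (a) derive Conjecture \ref{Schur} from Conjecture \ref{flag} by applying the Frobenius characteristic map, which turns tensor product of $S_n$-representations into internal product of symmetric functions, and (b) prove the partial Theorem \ref{evi} (equivariant log-concavity for degrees and co-degrees at most $3$, for all $n$) by combining representation stability of the co-FI-algebra $H^*(\mathcal{F}_n)$ with the stability criterion of Matherne et al.\ to reduce each fixed small degree to a finite computer check, and then transporting that low-degree result to high co-degree via the sign-twist duality (\ref{dual}).

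Your proposal recovers all three ingredients correctly: the Frobenius-characteristic translation back to Conjecture \ref{flag}, the duality reduction to $i\le\lfloor\binom{n}{2}/2\rfloor$, and the representation-stability-plus-finite-check mechanism for fixed small $i$. This is exactly the route the paper takes to obtain its Theorem \ref{evi}, and you are also right that this route cannot close the full conjecture: for a fixed $n$ the middle degrees $i\approx\binom{n}{2}/2$ grow with $n$, so no uniform stability window covers them, and duality only reflects the middle range onto itself. The paper's own response to this obstruction is precisely what you gesture at in your last paragraph, namely the stronger unimodality Conjecture \ref{uni} on the multiplicity sequences $d_{\nu,i}$, which together with the proved endpoint cases $d_{\nu,1}=d_{\nu,\binom{n}{2}-1}\ge 0$ would imply Conjecture \ref{Schur}. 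In short, your proposal matches the paper's approach to its partial result, correctly names the open gap, and correctly identifies the conjectural mechanism that would close it; there is no proof here because, as of the paper, there is no proof to be had.

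One small imprecision worth flagging: you say the fake degree $b_{\lambda,i}$ is ``eventually polynomial in the parts of $\lambda$.'' What representation stability actually gives is that for fixed $i$ the sequence of $S_n$-representations $H^{2i}(\mathcal{F}_n)$ stabilizes as a sequence of FI-modules (padding the first row of $\lambda$), which is the form used to invoke the finite-check criterion. The character-polynomial formulation is morally adjacent but is not the statement the reduction rests on.
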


In terms of Kronecker coefficients, which are the structure coefficients of the tensor product of irreducible representations of $S_{n}$, the above conjectures are equivalent to the following numerical inequalities

\begin{conj} \label{kro}
For all natural number $n$ and $1 \leq i \leq \binom{n}{2}-1$, the inequalities 
\begin{equation} \label{Kro}
\sum_{\lambda \vdash n} \sum_{\mu \vdash n} b_{ \lambda, i-1} b_{\mu, i+1} g_{\lambda \mu}^{\nu} \leq \sum_{\lambda \vdash n} \sum_{\mu \vdash n} b_{\lambda, i} b_{\mu, i} g_{\lambda \mu}^{\nu}
\end{equation}
hold for all $\nu \vdash n$, where $g_{\lambda \mu}^{\nu}$ are the Kronecker coefficients.
\end{conj}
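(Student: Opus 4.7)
Our plan is to attack the conjecture through its equivalent Schur-positivity formulation (Conjecture \ref{Schur}), since the Frobenius characteristic translates the tensor product of graded $S_n$-representations into the internal product of symmetric functions and the Lusztig--Stanley fake-degree theorem gives closed-form control of $S_{n,i} = \sum_{\lambda \vdash n} b_{\lambda,i} S_\lambda$ via standard Young tableaux. We then split the range $1 \leq i \leq \binom{n}{2}-1$ into a low-degree regime treated by representation stability, a high-degree regime reduced to the low-degree one by the duality (\ref{dual}), and a middle regime where the main obstruction sits.

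For fixed $i$ and $n$ growing, a standard tableau with major index equal to $i$ has all its descents at positions in $\{1,\dots,i\}$, forcing the large entries $i+1, i+2, \dots, n$ to fill the complement of a fixed subshape in a controlled way; in particular, for $n$ sufficiently large relative to $i$, the fake degree $b_{(n-k,\lambda_2,\lambda_3,\dots),\,i}$ becomes independent of $n$. This is the input needed for representation stability in the style of Church--Ellenberg--Farb: $S_{n,i}$ is represented by a character polynomial supported on a finite set of stable shapes, and combined with Murnaghan-type stability of the Kronecker coefficients $g_{\lambda[n]\mu[n]}^{\nu[n]}$, the inequality (\ref{Kro}) reduces to finitely many inequalities among stable triples for each small $i$, which can be verified by direct computer calculation; the remaining finitely many small values of $n$ are then checked individually.

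For high $i$, we use the duality (\ref{dual}): writing $N = \binom{n}{2}$, the isomorphism $H^{2(N-j)} \cong H^{2j} \otimes \mathrm{sgn}$ gives $b_{\lambda, N-j} = b_{\lambda^T, j}$, and because tensoring by the sign is a self-inverse operation on the representation ring that is compatible with the Kronecker product, the inequality (\ref{Kro}) at degree $i$ with target $\nu$ is equivalent to the inequality at degree $N-i$ with the same target $\nu$. This bootstraps the regime $i \geq N/2$ to the low-degree analysis above.

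The real obstacle is the middle range, where $i$ scales linearly with $\binom{n}{2}$: the partitions $\lambda$ contributing nontrivially to $b_{\lambda,i}$ no longer have a long stabilizing first row, the number of terms in (\ref{Kro}) grows with $n$, and representation stability simply does not apply. A natural attempt here is to construct an $S_n$-equivariant injection $H^{2(i-1)} \otimes H^{2(i+1)} \hookrightarrow H^{2i} \otimes H^{2i}$ directly, for instance by combining a hard-Lefschetz multiplication on one factor with a suitable adjoint operator on the other, but establishing equivariant injectivity of such a map seems to require genuinely new geometric input beyond character inequalities. As a fallback, one could aim for the stronger unimodal statement mentioned in the abstract, which may admit an inductive structure in $n$ that pointwise log-concavity alone does not.
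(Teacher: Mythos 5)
The statement you are asked to prove is Conjecture~\ref{kro}, which the paper itself leaves open: no proof in full generality appears anywhere in it. What the paper does prove is Theorem~\ref{evi}, which establishes inequality~(\ref{Kro}) in degree $\leq 3$ and co-degree $\leq 3$ for all $n$, and the strategy there is exactly the one you outline for the low- and high-degree regimes: representation stability of the graded pieces (the coinvariant ring is a co-FI-algebra of finite type, and $\{H^{2*i}\}_n$ stabilizes once $n \geq 2i$ for small $i$), Murnaghan-type stability of Kronecker products to reduce the problem to finitely many $n$ for each small fixed $i$, direct machine verification of those finitely many cases, and the sign-duality~(\ref{dual}) to transfer the result to co-degrees. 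Your identity $b_{\lambda,\,N-j} = b_{\lambda^T,\,j}$ with $N=\binom{n}{2}$, and the observation that (\ref{Kro}) at degree $i$ is equivalent to (\ref{Kro}) at degree $N-i$ because tensoring both sides by $\mathrm{sgn}^{\otimes 2}$ leaves the inequality unchanged, are both correct and match the paper's use of~(\ref{dual}). You also correctly diagnose the crux: this machinery only handles $i$ or $N-i$ bounded by a small constant, not $i$ growing linearly with $N$, and the middle range is precisely where the conjecture is open. The paper offers nothing there either beyond the strengthened Conjecture~\ref{uni}, supported by computation. So there is no hidden gap in your proposal --- the gap you flag is the gap --- and the part you do carry out reproduces, in method and in scope, the paper's partial result Theorem~\ref{evi}.

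One small caveat: your closing remark that the unimodal statement is a ``fallback'' is backwards in logical strength --- Conjecture~\ref{uni} implies Conjecture~\ref{kro}, not the other way around --- so proving it would be harder, not easier; the paper proposes it because it might expose more structure (and is confirmed numerically), not because it is a weaker target. Also note the duality reduces $i > N/2$ to $i < N/2$ but does not by itself bring you into the stable range; you still need $\min(i,\,N-i)$ to be small for the representation-stability step to apply.
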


Conjecture \ref{kro} is intriguing because we know so little about Kronecker coefficients\footnote{One could also consider the somewhat unnatural log-concavity of the induced tensor product by replacing the Kronecker coefficients in (\ref{Kro}) with the Littlewood--Richardson coefficients, but the resulting inequalities are NOT true in general.}, and a major unsolved problem in representation theory and combinatorics is to find some combinatorial descriptions of Kronecker coefficients. Since $H^{*}(\mathcal{F}_{n}, \mathbb{C})$ carries the regular representation of $S_{n}$, each $g_{\lambda \mu}^{\nu}$ will actually appears in (\ref{Kro}).

Using the representation stability theory, we have the following
\begin{thm} \label{evi}
For all natural number $n$, the cohomology ring $H^{2 *}(\mathcal{F}_{n}, \mathbb{C})$ is equivariantly log-concave in degree $\leq3$ and co-degree $\leq3$ as graded representation of $S_{n}$. That is, 
\begin{equation*}
\begin{aligned}
&\left(H^{2*1}\right) ^{\otimes 2} \supseteq H^{2*0} \otimes H^{2*2}, \left(H^{2*(\binom{n}{2}-1)}\right) ^{\otimes 2} \supseteq H^{2*\binom{n}{2}} \otimes H^{2*(\binom{n}{2}-2)},\\ &\left(H^{2*2}\right) ^{\otimes 2} \supseteq H^{2*1} \otimes H^{2*3}, \left(H^{2*(\binom{n}{2}-2)}\right) ^{\otimes 2} \supseteq H^{2*(\binom{n}{2}-1)} \otimes H^{2*(\binom{n}{2}-3)},\\
&\left(H^{2*3}\right) ^{\otimes 2} \supseteq H^{2*2} \otimes H^{2*4}, \left(H^{2*(\binom{n}{2}-3)}\right) ^{\otimes 2} \supseteq H^{2*(\binom{n}{2}-2)} \otimes H^{2*(\binom{n}{2}-4)}
\end{aligned}
\end{equation*}
for all $n$. Equivalently, Conjecture \ref{Schur} and Conjecture \ref{kro} holds in these degrees for all $n$.
\end{thm}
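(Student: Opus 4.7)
My strategy has three stages: a Poincar\'e duality reduction, a representation-stability reduction to a polynomial inequality in $n$, and a finite computer-assisted verification.

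First, I would use (\ref{dual}) to observe that $H^{2(\binom{n}{2}-i)} \cong H^{2i} \otimes \mathrm{sgn}$ as $S_n$-representations. Since tensoring by the one-dimensional $\mathrm{sgn}$ is an involutive auto-equivalence of $S_n$-representations and $\mathrm{sgn}^{\otimes 2} \cong \mathrm{triv}$, each inclusion $H^{2(m-1)} \otimes H^{2(m+1)} \hookrightarrow (H^{2m})^{\otimes 2}$ is equivalent, after tensoring both sides with $\mathrm{sgn}^{\otimes 2}$, to its co-degree twin. So it suffices to establish the three degree statements $m = 1, 2, 3$, which by the last paragraph of Section~2 amounts to Conjecture \ref{kro} for those degrees.

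Second, I would reduce each instance of (\ref{Kro}) to a finitely checkable assertion via representation stability. For fixed $i$, the Lusztig--Stanley theorem forces any $\lambda \vdash n$ with $b_{\lambda, i} \neq 0$ to have the padded form $\lambda = (n - |\overline{\lambda}|, \overline{\lambda})$ with $|\overline{\lambda}|$ bounded by an explicit constant depending only on $i$ (a standard tableau of major index $\leq i$ can only drop finitely many boxes out of the first row); furthermore, $b_{(n-|\overline{\lambda}|,\overline{\lambda}), i}$ stabilizes to an integer depending only on $\overline{\lambda}$ and $i$ once $n$ is large. Combining this with Murnaghan's stability theorem for the Kronecker coefficients $g^{\nu}_{\lambda \mu}$ evaluated on padded triples, both sides of (\ref{Kro}) become integer polynomials in $n$ of bounded degree once $n$ exceeds an explicit threshold $N_m$. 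Thus, for each target $\overline{\nu}$, the desired inequality reduces to non-negativity of a concrete polynomial $P_{m, \overline{\nu}}(n)$ on $\{n \geq N_m\}$.

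Third, for each $m \in \{1, 2, 3\}$ and each of the finitely many contributing $\overline{\nu}$, I would compute the relevant fake degrees $b_{\lambda, i}$ and Kronecker numbers $g^{\nu}_{\lambda\mu}$ (e.g.\ via the Murnaghan--Nakayama rule in Sage), extract the polynomial $P_{m, \overline{\nu}}$ in the stable range, check its positivity for $n \geq N_m$, and finally verify (\ref{Kro}) by direct computation at the finitely many remaining values $n < N_m$. The main obstacle is that stability delivers only polynomiality, not positivity: the polynomial $P_{m, \overline{\nu}}(n)$ in the stable range must be shown non-negative by an explicit argument, and the stabilization bound $N_m$ has to be made small enough (and the list of contributing $\overline{\nu}$ enumerated carefully, especially at $m = 3$) that the residual finite computation remains tractable. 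Handling this combinatorial explosion for $m = 3$ is where the real work of the computer-assisted verification lies.
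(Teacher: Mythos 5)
Your overall strategy---halve the work via Poincar\'e duality, reduce the infinite family of inequalities (\ref{Kro}) to a finite check via representation stability, then finish by computer---is precisely the route the paper takes: it verifies the degree cases $m\leq 3$ for $n\leq 12$ by Maple and dispatches the co-degree cases by (\ref{dual}). Where you diverge is the middle step. The paper invokes the co-FI-algebra structure on $H^{*}(\mathcal{F}_{n})$ (Sect.~5 of \cite{church2015fi}) to get uniform representation stability, checks by hand that $\{H^{2*i}\}_n$ stabilizes once $n\geq 2i$ for $i\leq 4$, and then quotes \cite[Lemma 3.2 and Theorem 3.3]{matherneequivariant}, which say exactly that for such sequences the subrepresentation inclusion for all $n$ follows from the inclusion for $n\leq 4m$---hence the sharp range $n\leq 12$. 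You instead re-derive the stability by hand via bounded fake-degree supports and Murnaghan stability for Kronecker coefficients, which is in the right spirit but over-engineers the end game: once $n$ is in the stable range, both sides of (\ref{Kro}) at padded partitions are \emph{constants}, not genuine polynomials in $n$, so there is no residual ``polynomial positivity'' problem to solve; checking a single $n$ in the stable range, together with the finitely many $n$ below it, already finishes the proof. That collapse to constants is precisely the content of the cited lemmas from \cite{matherneequivariant}, and quoting them spares you both the worry about $P_{m,\overline{\nu}}$ and the bookkeeping needed to make your ad hoc threshold $N_m$ explicit and competitive with $4m$.
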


\begin{proof}
In Sect.5 of \cite{church2015fi}, $H^{*}(\mathcal{F}_{n}, \mathbb{C}) \cong \mathbb{C}\left[t_{1}, \ldots, t_{n}\right] /\left(\sigma_{1}, \ldots, \sigma_{n}\right)$ is shown to be a co-FI-algebra of finite type, therefore each $\{H^{2*i}\}_{n}$ for fixed $i$ form a uniformly representation stable sequence of $S_{n}$-representations, see also \cite[Theorem 7.4]{church2013representation}. With a little more effort in analyzing the major index, it can be shown that for $i \leq 4$, $\{H^{2*i}\}_{n}$ stabilized once $n \geq 2i$. Therefore, by \cite[Lemma 3.2 and Theorem 3.3]{matherneequivariant}, to prove that
$H^{2*(m-1)} \otimes H^{2*(m+1)}$ is isomorphic to a subrepresentation of $H^{2*m} \otimes H^{2*m}$ for all $n$ and $m \leq 3$, it is sufficient to check that $H^{2*(m-1)} \otimes H^{2*(m+1)}$ is isomorphic to a sub-representation of $H^{2*m} \otimes H^{2*m}$ for $n \leq 4 m$.
We have performed these checks for $n \leq 12 $ using maple. The assertions for the co-degree part follow from the duality (\ref{dual}).
\end{proof}

\section{A unimodal conjecture}

By examining the computed data, we observe that the numerical evidence suggests the stronger conjecture
\begin{conj} \label{uni}
For all natural number $n$ and $1 \leq i \leq \binom{n}{2}-1$, the sequence $d_{\nu, i}$ is symmetric and unimodal, where $$
d_{\nu, i}  :=\sum_{\lambda \vdash n} \sum_{\mu \vdash n} b_{\lambda, i} b_{\mu, i} g_{\lambda \mu}^{\nu}-\sum_{\lambda \vdash n} \sum_{\mu \vdash n} b_{ \lambda, i-1} b_{\mu, i+1} g_{\lambda \mu}^{\nu}.
$$
\end{conj}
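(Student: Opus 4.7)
The plan is to treat symmetry and unimodality of the sequence $(d_{\nu,i})_{i=1}^{N-1}$ separately, writing $N := \binom{n}{2}$.

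For symmetry I would invoke the duality (\ref{dual}): taking its graded pieces gives $H^{2i}(\mathcal{F}_n) \cong H^{2(N-i)}(\mathcal{F}_n) \otimes \operatorname{sgn}$ as $S_n$-representations, which at the level of fake degrees reads $b_{\lambda,i} = b_{\lambda^T, N-i}$. Combining this with the Kronecker identity $g_{\lambda^T\mu^T}^{\nu} = g_{\lambda\mu}^{\nu}$---itself a consequence of $V(\lambda^T) \otimes V(\mu^T) \cong V(\lambda)\otimes V(\mu) \otimes \operatorname{sgn}^{\otimes 2} \cong V(\lambda)\otimes V(\mu)$---and then relabeling $\lambda \mapsto \lambda^T$, $\mu \mapsto \mu^T$ in each double sum, one obtains $d_{\nu,N-i} = d_{\nu,i}$, using in addition the symmetry $g_{\lambda\mu}^{\nu} = g_{\mu\lambda}^{\nu}$ to reconcile the roles of $i-1$ and $i+1$ in the cross term.

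Unimodality is where the real work lies. The most direct strategy is to produce, for each $i \leq N/2$, an $S_n$-equivariant injection of $V(\nu)$-isotypic components from the virtual $S_n$-module $E_i := H^{2i} \otimes H^{2i} \ominus H^{2(i-1)} \otimes H^{2(i+1)}$ into $E_{i+1}$. A natural candidate operator on the tensor square $H^* \otimes H^*$ is the Koszul-style derivation $\phi(v \otimes w) := Lv \otimes w - v \otimes Lw$ built from an ample class $L \in H^2$; the point is that $\phi$ interacts cleanly with the log-concavity difference. To pass from this to an $S_n$-equivariant statement one must replace $L$ by a symmetric expression---either by averaging over $S_n$-translates or by working with a distinguished $S_n$-symmetrized Schubert divisor---and verify injectivity in the resulting setup. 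Failing that, one can attempt a purely combinatorial route using the Lusztig--Stanley description $b_{\lambda,i} = \#\{T \in \operatorname{SYT}(\lambda) : \operatorname{maj}(T) = i\}$: express $d_{\nu,i}$ as a signed count of decorated pairs of standard tableaux and construct an injection witnessing $d_{\nu,i} \leq d_{\nu,i+1}$ for $i < N/2$, in the spirit of the bijective proofs of unimodality of $q$-binomial coefficients.

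The chief obstacle---and the reason I expect Conjecture \ref{uni} to be at least as difficult as Conjecture \ref{flag}---is that there is no known $S_n$-equivariant $\mathfrak{sl}_2$-action on $H^*(\mathcal{F}_n,\mathbb{C})$: since the coinvariant ring has no non-constant $S_n$-invariants, no Lefschetz element can commute with the $S_n$-action, and the classical hard-Lefschetz argument that yields unimodality of the Poincaré polynomial does not pass to the isotypic-component level, let alone to the finer differences $E_i$. As a partial result one can combine the FI-module stability exploited in the proof of Theorem \ref{evi} with a finite computer check to confirm $d_{\nu,i} \leq d_{\nu,i+1}$ in the two end ranges $i \leq c$ and $i \geq N-c$ for each fixed $c$, since for such $i$ both sides stabilize in $n$. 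What remains inaccessible by such methods is the middle of the sequence; the obstruction there is the same one that blocks Conjecture \ref{flag}, namely the limited combinatorial grip we have on the Kronecker coefficients $g_{\lambda\mu}^{\nu}$.
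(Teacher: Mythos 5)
This statement is a \emph{conjecture}; the paper proves only its symmetry half, in a single sentence (``The symmetry comes from the duality (\ref{dual}).''), and offers nothing beyond numerical evidence for the unimodality half. Your symmetry argument correctly fleshes out that remark: the duality gives $b_{\lambda,i}=b_{\lambda^{T},\,N-i}$ with $N=\binom{n}{2}$, and combining this with $g_{\lambda^{T}\mu^{T}}^{\nu}=g_{\lambda\mu}^{\nu}$ (from $\mathrm{sgn}\otimes\mathrm{sgn}\cong\mathrm{triv}$), the relabeling $\lambda\mapsto\lambda^{T}$, $\mu\mapsto\mu^{T}$, and $g_{\lambda\mu}^{\nu}=g_{\mu\lambda}^{\nu}$ to swap the $i-1$ and $i+1$ slots, does yield $d_{\nu,N-i}=d_{\nu,i}$. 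That is exactly the mechanism the paper is gesturing at.

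Your handling of the unimodality half is likewise accurate as a status report rather than a proof. The observation that the coinvariant ring has no nonconstant $S_{n}$-invariants, hence no $S_{n}$-invariant Lefschetz class and no equivariant $\mathfrak{sl}_{2}$-action, is a sound explanation of why the classical hard-Lefschetz route to unimodality does not descend to isotypic multiplicities, let alone to the differences $d_{\nu,i}$; and the Kronecker-coefficient bottleneck you name is the same one the paper flags in the discussion surrounding Conjecture \ref{kro}. Your suggested partial result (FI-stability plus a finite computer check at the two ends of the range) is in the same spirit as the proof of Theorem \ref{evi}, though note it would be establishing the stronger inequality $d_{\nu,i}\leq d_{\nu,i+1}$ in the end ranges, whereas the paper only records $d_{\nu,1}=d_{\nu,N-1}\geq 0$ (from Theorem \ref{evi}) as the boundary input needed to deduce Conjecture \ref{flag} from symmetry plus unimodality. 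In short: your proposal matches and expands the paper's treatment of the symmetry part, correctly leaves unimodality open, and correctly diagnoses why.
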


That is, when we decompose the virtual representation $H^{2*i}(\mathcal{F}_{n}) \otimes H^{2*i}(\mathcal{F}_{n})-H^{2*(i-1)}(\mathcal{F}_{n}) \otimes H^{2*(i+1)}(\mathcal{F}_{n})$, the multiplicity sequence $\{d_{\nu, i}\}$ should be symmetric and unimodal for all $n$ and all partition $\nu$ of $n$. The symmetry comes from the duality (\ref{dual}). Conjecture \ref{uni} implies Conjecture \ref{flag} (equivalently, Conjecture \ref{Schur} and Conjecture \ref{kro}) since $d_{\nu, 1}=d_{\nu,  \binom{n}{2}-1} \geq 0$ for all $\nu$ by Theorem \ref{evi}.

\begin{rmk}
In \cite{billey2020distribution} and  \cite{billey2020tableau}, the authors study the distribution of the major index on standard Young tableaux and enumerative questions involving the fake degrees, we note that the sequences of fake degrees ${b_{\lambda, i}}$ themselves are NOT always
unimodal.
\end{rmk}

\section{Some remarks}

\subsection{Other Coxeter groups}

Since the \emph{graded dimensions of the coinvariant ring are log-concave for any finite Coxeter group}, see \cite[Lemma 7.1.1 and Theorem 7.1.5]{bjorner2006combinatorics}, it is interesting to ask whether the coinvariant ring of Coxeter groups other than symmetric groups carries log-concave graded representations. Unfortunately, it is easy to show that except $m=2 \text{ or } 3$, all other dihedral groups $I_{2}(m)$ including the Weyl groups of type $B_{2}$ and type $G_{2}$ are not the case. We don't know whether the equivariant log-concavity is true for the simply laced (ADE) types. In \cite{wilson2014fiw}, the author proved that each sequence of graded piece of the coinvariant ring for the classical Weyl groups is uniformly representation stable, but we don't know whether the equivariant log-concavity holds after stabilizing

\subsection{Relation with the conjectures on the configuration spaces}

In \cite{matherneequivariant}, the authors made equivariant log
concavity conjectures for the cohomology or intersection cohomology of variant configuration spaces. In a series of papers \cites{atiyah2001configurations,atiyah2001equivariant,atiyah2002geometry,atiyah2002geometry2}, to answer the Berry-Robbins problem asked by two physicists, Sir Michael Atiyah constructed a continuous map from the configuration space $\text{ Conf} \left(n, \mathbb{R}^{3}\right)$ to the flag manifold $U_{n} / T$, which is compatible with the action of the symmetric group $S_{n}$ \footnote{See \cite{atiyah2002nahm} for the generalization to other Weyl groups.}. Both $H^{*}(\text{ Conf} \left(n, \mathbb{R}^{3}\right))$ and $H^{*}(U_{n} / T)$ carry the regular representation of $S_{n}$ \cite{atiyah2001equivariant}. But we can's get a formal relationship between log-concave conjectures on both sides using Atiyah's map. It seems that the unimodal property similar with Conjecture \ref{uni} are also true in their case \footnote{We thank Matthew H.Y. Xie for some verifications.}.

\subsection{The Springer representations}

One could ask a further question that whether the equivariant log-concavity hold for 
other total Springer representations in type A besides the full flag case. If this is true, it will imply that the Betti numbers of Springer fibers are unimodal and log-concave, which was observed in \cite{fresse2013unimodality} for some special types of partitions and $n \leq 9$. By Lusztig's result, the graded multiplicity is given by the Kostka--Foulkes polynomial, and the Frobenius image is therefore given by the modified Hall--Littlewood polynomials. Unfortunately, the equivariant log-convity of Springer representations are not true in general, the smallest counterexample appears in $S_{7}$, all counterexample up to $S_{10}$ is $\{(4,1,1,1), (5,1,1,1), (6,1,1,1), (5,2,1,1), (5,1,1,1,1), (7,1,1,1), (6,2,1,1),\\
(6,1,1,1,1), (5,2,1,1,1), (5,1,1,1,1,1)\}$. 

\bibliography{template}

\end{document}